\newtheorem{thm}{Theorem}[section]
\newtheorem{lem}[thm]{Lemma}
\theoremstyle{definition}
\newcommand{\Lm}{{\Lambda}}
\let\abs=\envert
\newcommand{\floor}[1]{\left\lfloor#1\right\rfloor}
\theoremstyle{remark}
\begin{document}
\title{On the simultaneous equations $\sigma(2^a)=p^{f_1}q^{g_1}, \sigma(3^b)=p^{f_2}q^{g_2}, \sigma(5^c)=p^{f_3}q^{g_3}$\footnote{2010 Mathematics 
Subject Classification:
11A05, 11A25, 11A51, 11D61.}
\footnote{Key words and phrases: Odd perfect numbers; sum of divisors; exponential diophantine equations.}}
\author{Tomohiro Yamada}
\date{}
\maketitle

\begin{abstract}
We shall solve the simultaneous equations
$\sigma(2^a)=p^{f_1}q^{g_1}, \sigma(3^b)=p^{f_2}q^{g_2}, \sigma(5^c)=p^{f_3}q^{g_3}$
with $p, q$ distinct primes.
\end{abstract}

\section{Introduction}\label{intro}

As usual, let $\sigma(N)$ denote the sum of divisors of $N$ and $\omega(N)$
the number of distinct prime factors of $N$.
In \cite{Ymd}, the author has shown that
there are only finitely many odd superperfect numbers (i.e. the number satisfying $\sigma(\sigma(N))=2N$)
with bounded number of distinct prime factors
by proving that the simultaneous equation $\sigma(p_i^{e_i})=q_1^{f_{1i}}\cdots q_k^{f_{ki}}$
for $k+1$ prime powers $p_i^{e_i} (i=1, 2, \ldots, k+1)$ cannot have solutions with $p_1, \cdots, p_{k+1}$ all small.
In this paper, we use the method developed in \cite{Ymd} to solve the simultaneous equations
$\sigma(2^a)=p^{f_1}q^{g_1}, \sigma(3^b)=p^{f_2}q^{g_2}, \sigma(5^c)=p^{f_3}q^{g_3}$
with $p, q$ distinct primes.

Wakulicz \cite{Wak} has shown that all solutions of the purely exponential diophantine equation
$2^n-5^m=3$ are $(n, m)=(2, 0), (3, 1)$ and $(7, 3)$,
from which Makowski and Schinzel \cite{MS} derived that the equation
$\sigma(2^a)=\sigma(5^c)$ has only the solution $(a, c)=(4, 2)$.
We note that it is easy to show that $\sigma(2^a)=\sigma(3^b)$ has no nontrivial solution and
$\sigma(3^b)=\sigma(5^c)$ also has no nontrivial solution.
Bugeaud and Mignotte \cite{BM} has shown that neither of $\sigma(2^a), \sigma(3^b), \sigma(5^c)$
can be perfect power except $\sigma(3)=2^2$ and $\sigma(3^4)=11^2$.
Moreover, they have shown that the only perfect powers $\frac{x^n-1}{x-1}$ with $x=z^t, z\leq 10$
are $(3^5-1)/2=11^2$ and $(7^4-1)/6=20^2$.

Now we shall state our result.
\begin{thm}\label{th1}
The simultaneous equations $\sigma(2^a)=p^{f_1}q^{g_1}, \sigma(3^b)=p^{f_2}q^{g_2}, \sigma(5^c)=p^{f_3}q^{g_3}$
with $a, b, c>0, f_1, f_2, f_3, g_1, g_2, g_3\geq 0$ and $p, q$ distinct primes
has only the following solutions:
i) $(a, b, c)=(1, 1, 1)$.
ii) $(a, b, c)=(4, 1, 2)$,
iii) $(a, b, c)=(4, 4, 2)$ and
iv) $(a, c)=(4, 2)$ and $\sigma(3^b)$ is prime.
In other words, if $\omega(\sigma(2^a 3^b 5^c))\leq 2$, then $(a, b, c)$ must satisfy
one of the above.
\end{thm}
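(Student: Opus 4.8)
\emph{Sketch of the strategy.} The plan is to translate the three equations into statements about $2^{a+1}-1$, $3^{b+1}-1$ and $5^{c+1}-1$, use congruences and primitive prime divisors to pin $a+1,b+1,c+1$ down to a short list of shapes, and then finish by a case analysis powered by the facts quoted above (no value of $\sigma$ at a power of $2,3,5$ is a perfect power except $\sigma(3)=2^2$ and $\sigma(3^4)=11^2$; $\sigma(2^a)=\sigma(5^c)$ only at $(4,2)$; and $\sigma(2^a)=\sigma(3^b)$, $\sigma(3^b)=\sigma(5^c)$ have no nontrivial solutions). First I record $\sigma(2^a)=2^{a+1}-1$ (always odd), $\sigma(3^b)=(3^{b+1}-1)/2$ (odd iff $b$ even), $\sigma(5^c)=(5^{c+1}-1)/4$ (odd iff $c$ even), together with $3\mid\sigma(2^a)\iff a$ odd, $3\nmid\sigma(3^b)$, $3\mid\sigma(5^c)\iff c$ odd, $5\mid\sigma(2^a)\iff 4\mid a+1$, $5\mid\sigma(3^b)\iff 4\mid b+1$, $5\nmid\sigma(5^c)$; since $\sigma$ is multiplicative, $\omega(\sigma(2^a3^b5^c))\le 2$ is literally the displayed system, so the two forms of the statement coincide. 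Now split on whether $2\in\{p,q\}$. If $2\in\{p,q\}$, then $\sigma(2^a)$, being odd, is a power of the other prime and hence (not a perfect power) equal to it, so $q:=\sigma(2^a)=2^{a+1}-1$ is a Mersenne prime; counting primitive divisors of $3^{b+1}-1$ (all of whose prime factors lie in $\{2,q\}$) forces $b+1\in\{2\}$ or $b+1$ an odd prime, the latter giving $\sigma(3^b)=q$, i.e.\ $2^{a+2}-3^{b+1}=1$, impossible for $a,b>0$; so $b=1$. Likewise $5^{c+1}-1$ has all prime factors in $\{2,q\}$, which forces $c=1$ when $q=3$ and otherwise $c+1$ an odd prime with $\sigma(5^c)=q$, i.e.\ $2^{a+3}-5^{c+1}=3$, whose only relevant solution, by Wakulicz's theorem, is $a=4$, $c+1=3$. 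This yields exactly $(a,b,c)=(1,1,1)$ and $(4,1,2)$.

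Assume now $2\notin\{p,q\}$; then $\sigma(3^b),\sigma(5^c)$ are odd, so $b,c$ are even. If $a$ were odd, then $3\mid\sigma(2^a)$, so $3\in\{p,q\}$, say $q=3$; since $3\nmid\sigma(3^b)$ and $3\nmid\sigma(5^c)$ (the latter as $c$ is even), both $\sigma(3^b)$ and $\sigma(5^c)$ are powers of $p$, hence equal to $p$ by the perfect-power result (the exceptions $\sigma(3)=4$, $\sigma(3^4)=11^2$ would force $p\in\{2,11\}$, which one checks is impossible), whence $\sigma(3^b)=\sigma(5^c)$, a contradiction. So $a$ is even; the same congruences then exclude $3,5\in\{p,q\}$, so $p,q\ge 7$. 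With $a+1,b+1,c+1$ all odd, counting primitive prime divisors of $2^{a+1}-1$, $3^{b+1}-1$, $5^{c+1}-1$ (now $\Phi_1$, and in base $3$ also $\Phi_2$, supplying only the prime $2$) shows each of $a+1,b+1,c+1$ has at most three divisors, i.e.\ is a prime or the square of a prime. When an exponent is $\ell^2$, the corresponding $\sigma$ equals $\Phi_\ell(x)\Phi_{\ell^2}(x)$ with $x\in\{2,3,5\}$; here $\Phi_\ell(x)=\sigma(x^{\ell-1})$ is a perfect power only when $x=3,\ell=5$ (giving $11^2$), while $\Phi_{\ell^2}(x)=(X^\ell-1)/(X-1)$ with $X=x^\ell$ is, its bases being $\le 10$, never a perfect power by the second Bugeaud--Mignotte theorem; so, apart from the single exception $\sigma(3^{24})=11^2\,\Phi_{25}(3)$ — which I would dispatch directly, as it forces $\sigma(2^a)=\Phi_{25}(3)$, not of the form $2^{a+1}-1$ — each such $\sigma$ is a product of two distinct primes $\ge 7$.

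Granting that $\sigma(2^a)$ and $\sigma(5^c)$ are in fact each prime (see below), the rest is short. Being prime $\{p,q\}$-units, each equals $p$ or $q$; if $\sigma(2^a)\ne\sigma(5^c)$ then $\{p,q\}=\{\sigma(2^a),\sigma(5^c)\}$ are distinct primes and $\sigma(3^b)=\sigma(2^a)^{f_2}\sigma(5^c)^{g_2}$, where $f_2=0$ gives $\sigma(3^b)=\sigma(5^c)$ (or, for $b=4$, $\sigma(5^c)=11$) and $g_2=0$ gives $\sigma(3^b)=\sigma(2^a)$ (or $\sigma(2^a)=11$), all impossible, while $f_2,g_2\ge 1$ forces $\sigma(2^a)=2^{a+1}-1\mid 3^{b+1}-1$; but $3$ is a quadratic nonresidue modulo the Mersenne prime $2^{a+1}-1$ (which is $\equiv 3\pmod 4$ and $\equiv 1\pmod 3$), so $\mathrm{ord}_{2^{a+1}-1}(3)$ is even and cannot divide the odd number $b+1$ — contradiction. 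Hence $\sigma(2^a)=\sigma(5^c)$, so $(a,c)=(4,2)$ and $\sigma(2^4)=\sigma(5^2)=31=:p$; since $\mathrm{ord}_{31}(3)=30$ is even and $b+1$ is odd, $31\nmid\sigma(3^b)$, so $\sigma(3^b)=q^{g_2}$, which by the perfect-power result is a prime (solution (iv)) or, when $b=4$, $11^2$ (solution (iii)). Together with $(1,1,1)$ and $(4,1,2)$ this is the full list.

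The main obstacle is exactly the claim just used: that when $2\notin\{p,q\}$, neither $\sigma(2^a)$ nor $\sigma(5^c)$ is a genuine product of two primes. For prime‑square exponents this was obtained in the second paragraph; for a prime exponent $a+1=\ell$, if $\sigma(2^a)=2^\ell-1=p^{f_1}q^{g_1}$ with $f_1,g_1\ge 1$, I would use that the Jacobi symbol $\bigl(\tfrac{3}{2^\ell-1}\bigr)=-1$, so some $r\in\{p,q\}$ occurring to odd multiplicity has $\mathrm{ord}_r(3)$ even and thus $r\nmid\sigma(3^b)$; this decouples the equations and drives $\sigma(3^b)$, and then $\sigma(5^c)$ by the analogous argument, down to a prime power that one shows cannot be realized — this quadratic‑residue and congruence bookkeeping, combined with Wakulicz's equation $2^n-5^m=3$ and the bound $p,q\ge 7$, being the technical core, carried out by the method of \cite{Ymd}.
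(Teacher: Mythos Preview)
Your route is essentially different from the paper's. The paper does not attempt an elementary classification at all: it applies Matveev's lower bound for linear forms in logarithms to obtain huge explicit bounds on the $e_i$ (Lemma~\ref{lm31}), introduces an auxiliary form in $\log 2,\log 3,\log 5$ to bound the smallest value $x=\min_i(a_i^{e_i}-1)/(a_i-1)$ (Lemmas~\ref{lm32}--\ref{lm33}), reduces these bounds by LLL to $\log x<355$ (Lemma~\ref{lm34}), and then finishes by a computer-aided check of the residual orders of $2,3,5$ modulo the prime factors of every candidate $x$ (the tables in Section~4). No parity or quadratic-reciprocity shortcuts are used.

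Your congruence reductions in the case $2\in\{p,q\}$ and the subsequent restriction (when $2\notin\{p,q\}$) of $a+1,b+1,c+1$ to a prime or prime square are correct and pleasant, and the observation that $\bigl(\tfrac{3}{2^\ell-1}\bigr)=-1$ for every odd $\ell\ge 3$, forcing some prime factor of $2^\ell-1$ to have even order for $3$, is a genuine idea the paper does not exploit.

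But the proof has a real gap exactly where you place the ``main obstacle'', namely the claim that $\sigma(2^a)$ and $\sigma(5^c)$ are each prime when $2\notin\{p,q\}$. Two points. First, for prime-square exponents you established the \emph{opposite}: if $a+1=\ell^2$ then $\sigma(2^a)=\Phi_\ell(2)\,\Phi_{\ell^2}(2)$ is a product of two distinct primes, so $\{p,q\}$ is that specific pair and you must still exclude $\sigma(3^b),\sigma(5^c)\in\langle p,q\rangle$ for unbounded $\ell$; nothing in the second paragraph does this. Second, for a prime exponent $a+1=\ell$ with $2^\ell-1$ genuinely composite, your Jacobi-symbol trick controls only one of $p,q$ relative to $\sigma(3^b)$; the ``analogous argument'' for $5$ does not go through in general, since $\bigl(\tfrac{5}{2^\ell-1}\bigr)=+1$ whenever $\ell\equiv 1\pmod 4$, so nothing forbids both $p$ and $q$ from dividing $\sigma(5^c)$. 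Your closing appeal to ``the method of \cite{Ymd}'' is, in effect, an appeal to Baker's theory --- which is precisely how the present paper proceeds --- so as written the proposal is a partial elementary reduction with the decisive case deferred rather than a complete alternative proof.
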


Our result is related to the Nagell-Ljunggren equation
\begin{equation}
\frac{x^m-1}{x-1}=y^n, x\geq 2, y\geq 2, m\geq 3, n\geq 2,
\end{equation}
which has been conjectured to have only three solutions
$(x, y, m, n)=(3, 11, 5, 2), (7, 20, 4, 2), (18, 7, 3, 3)$.
Some of recent remarkable results concerning to the Nagell-Ljunggren equation
are \cite{BHM}, \cite{BM}, \cite{Mih1}, \cite{Mih2} and \cite{BMh}.
Our result leads us to conjecture that the diophantine equation
\begin{equation}
\frac{x^l-1}{x-1}=y^mz^n
\end{equation}
has only finitely many solutions in integers $x\geq 2, z\geq y\geq 2$ and $l, m, n\geq n_0$
for some constant $n_0$.
The $abc$-conjecture, which Mochizuki \cite{Mch} claims to prove,
would allow us to take $n_0=3$.  Indeed, applying the $abc$-conjecture to the equation
$1+(x-1)y^m z^n=x^l$, we see that for any given $\epsilon>0$,
the inequality
\begin{equation}
\frac{2}{l}+\frac{l-1}{l}\times \frac{2}{n+\min\{n, m\}}\geq 1-\epsilon
\end{equation}
would hold for sufficiently large $x^l$.
Hence, with only finitely many exceptions, we would have
i) $l\geq 5, n=1$, ii) $l=4, n=1$, iii) $(l, m, n)=(4, 1, 2)$.
iv) $l=3, n\leq 2$ or v) $(l, m, n)=(3, 1, 3)$.

\section{Preliminary Lemmas}\label{lemmas}

In this section, we introduce some preliminary lemmas.  One is Matveev's lower bound for linear forms of logarithms \cite{Mat}.
\begin{lem}\label{lmll}
Let $a_1, a_2, \ldots, a_n$ be positive integers such that $\log a_1, \ldots, \log a_n$ are
not all zero and $A_j\geq\max\{0.16, \log a_j\}$ for each $j=1, 2, \ldots, n$.
Moreover, we put
\begin{equation}
\begin{split}
B= & \max \{1, \abs{b_1}A_1/A_n, \abs{b_2}A_2/A_n, \ldots, \abs{b_n} \},\\
\Omega= & A_1A_2\ldots A_n,\\
C_0= & 1+\log 3-\log 2,\\
C_1(n)= & \frac{16}{n!}e^n(2n+3)(n+2)(4(n+1))^{n+1}\left(\frac{1}{2}en\right) \\
& \times (4.4n+5.5\log n+7)
\end{split}
\end{equation}
and
\begin{equation}
\Lm=b_1\log a_1+\ldots+b_n\log a_n.
\end{equation}
Then we have
\begin{equation}
\log\abs{\Lm}>-C_1(n)(C_0+\log B)\max\left\{1, \frac{n}{6}\right\}\Omega.
\end{equation}
\end{lem}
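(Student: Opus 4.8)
The statement is Matveev's theorem, the sharpest available explicit lower bound for a homogeneous linear form in logarithms of rational integers, so any proof must follow the Gelfond--Baker method and its quantitative refinements. The plan is to argue by contradiction: assume $\abs{\Lm}$ is smaller than the asserted bound $\exp(-C_1(n)(C_0+\log B)\max\{1,n/6\}\Omega)$, introduce a system of auxiliary integer parameters $L_0,L_1,\ldots,L_n$ (degrees), $S$ (number of interpolation points) and $T$ (interpolation order) depending on $n$, $B$ and $\Omega=A_1\cdots A_n$, subject to relations to be optimized only at the very end, and deduce a contradiction.

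First I would build an auxiliary function. Since $\abs{\Lm}$ is assumed minuscule, one has $a_n^{b_n}\approx\prod_{j<n}a_j^{-b_j}$ up to a factor $e^{\Lm}$, so the exponentials $a_1^z,\ldots,a_n^z$ are almost dependent along the direction $(b_1,\ldots,b_n)$. By Siegel's lemma (the pigeonhole principle applied to an integer linear system whose unknowns outnumber its equations) I would produce a nonzero polynomial $P(X_0,X_1,\ldots,X_n)\in\BZ[X_0,\ldots,X_n]$ of controlled degrees and height such that the analytic function
\[
F(z)=P\!\left(z,\,a_1^{z},\ldots,a_n^{z}\right)
\]
vanishes at $z=0,1,\ldots,S$ together with its first $T$ derivatives. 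The output of Siegel's lemma is a $P$ whose coefficients are no larger than what $\Omega$ and $\log B$ dictate, and this is exactly where the factors $\Omega$ and $(C_0+\log B)$ in the final bound originate.

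Next comes the analytic heart. Because $F$ vanishes to high order at many points while $\Lm$ is tiny, I would apply the maximum modulus principle, most cleanly packaged as an upper bound on an interpolation determinant in Laurent's style, to extrapolate: $F$ and its derivatives stay extremely small on a range far larger than the one built in. This feeds a zero (multiplicity) estimate on the commutative group variety $\mathbb{G}_a\times\mathbb{G}_m^n$, for which Matveev substitutes a self-contained elementary determinant computation, forcing either that $P$ vanishes identically or that a genuine multiplicative relation among $a_1,\ldots,a_n$ holds along $(b_1,\ldots,b_n)$; each alternative contradicts the nonvanishing of $P$ together with $\Lm\neq0$. Confronting the resulting nonzero rational quantity with a Liouville bound (here elementary, since the $a_j$ are integers: a nonzero rational has absolute value at least the reciprocal of its denominator) yields the contradiction, and reading off the admissible parameter sizes produces the explicit constant.

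The main obstacle is not the qualitative existence of such a bound, which is Baker's theorem, but the optimization of every constant so that the dependence on $n$ compresses to the stated $C_1(n)$, with its $e^n$, its $(4(n+1))^{n+1}$, and the saving $1/n!$. This demands Matveev's sharpest form of the zero estimate, a careful volume and combinatorial count of the interpolation conditions, and an inductive passage in the number of logarithms so that no power of $n$ is squandered at any stage; simultaneously balancing $L_0,\ldots,L_n,S,T$ against these estimates, while retaining the factor $\max\{1,n/6\}$ rather than a larger power of $n$, is the delicate part.
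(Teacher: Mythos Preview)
The paper does not prove this lemma at all: it is stated as a quotation of Matveev's theorem with a citation to \cite{Mat}, and is used throughout the paper as a black box. Your proposal instead sketches the Gelfond--Baker machinery behind Matveev's result (auxiliary polynomial via Siegel's lemma, extrapolation via maximum modulus or interpolation determinants, zero estimate on $\mathbb{G}_a\times\mathbb{G}_m^n$, Liouville bound, and the combinatorial optimization that yields the explicit constant $C_1(n)$). That outline is broadly faithful to how Matveev's proof proceeds, but in the context of the present paper it is out of scope: the intended ``proof'' here is simply the reference, and a self-contained derivation of the constant $C_1(n)$ would require the full apparatus of Matveev's paper, not a one-page sketch.
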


The others concern to some arithmetical properties of values of cyclotomic polynomials.
Lemma \ref{lm21} is a basic and well-known result of this area.
Lemma \ref{lm21} has been proved by Zsigmondy \cite{Zsi} and rediscovered by many authors
such as Dickson \cite{Dic} and Kanold \cite{Kan}.  We need only the special case $b=1$,
where this lemma had already been proved by Bang \cite{Ban}.
See also Theorem 6.4A.1 in \cite{Sha}.
\begin{lem}\label{lm21}
If $a>b\geq 1$ are coprime integers, then $a^n-b^n$ has a prime factor
which does not divide $a^m-b^m$ for any $m<n$, unless $(a, b, n)=(2, 1, 6)$,
$a-b=n=1$, or $n=2$ and $a+b$ is a power of $2$.
\end{lem}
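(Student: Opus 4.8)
The plan is to prove this primitive-prime-divisor statement (Zsigmondy's theorem) through the homogeneous cyclotomic polynomials $\Phi_d(a,b)=b^{\phi(d)}\Phi_d(a/b)$, which are integers obeying the factorization $a^n-b^n=\prod_{d\mid n}\Phi_d(a,b)$. Since $\gcd(a,b)=1$, any prime $p\mid a^n-b^n$ satisfies $p\nmid ab$, so $ab^{-1}$ is a unit modulo $p$ and $p\mid a^m-b^m$ exactly when $\operatorname{ord}_p(ab^{-1})\mid m$. Hence a \emph{primitive} prime factor of $a^n-b^n$ is precisely a prime $p$ with $\operatorname{ord}_p(ab^{-1})=n$, and such a $p$ divides $\Phi_n(a,b)$ but no earlier factor. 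It therefore suffices to show that $\Phi_n(a,b)$ carries a prime factor of order exactly $n$, outside the listed exceptions.

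First I would analyze the prime factors of $\Phi_n(a,b)$. Fix a prime $p\mid\Phi_n(a,b)$ and put $d=\operatorname{ord}_p(ab^{-1})$, so that $d\mid n$ and $d\mid p-1$. If $d=n$ then $p$ is a primitive prime divisor and we are finished. Otherwise $d<n$; here a lifting-the-exponent computation, comparing $v_p$ of consecutive factors $\Phi_{dp^{i}}(a,b)$, shows that $n=dp^{j}$ for some $j\geq 1$ and that $v_p(\Phi_n(a,b))=1$, the sole anomaly being $p=2$, $n=2$ (where $\Phi_2(a,b)=a+b$ may absorb a high power of $2$). Because $d\mid p-1$ forces every prime factor of $d$ to be strictly smaller than $p$, the relation $n=dp^{j}$ identifies $p$ as the largest prime factor $P(n)$ of $n$. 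Consequently, for $n\geq 3$ the only possible non-primitive prime factor of $\Phi_n(a,b)$ is the odd prime $P(n)$, occurring to the first power.

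Combining these facts, suppose that for some $n\geq 3$ the integer $a^n-b^n$ has no primitive prime divisor. Then every prime factor of $\Phi_n(a,b)$ is non-primitive, so $\Phi_n(a,b)$ equals $P(n)$ or $1$. I would rule this out with the analytic lower bound arising from $\Phi_n(a,b)=\prod_{\gcd(k,n)=1}\abs{a-\zeta^k b}$ with $\zeta=e^{2\pi i/n}$: each factor has modulus at least $a-b\geq 1$, and for $n\geq 3$ none of the factors is real, so each is strictly larger than $a-b$. Carried out carefully, this yields $\Phi_n(a,b)>P(n)$ for all $n\geq 3$ except $(a,b,n)=(2,1,6)$, where indeed $\Phi_6(2,1)=3=P(6)$ with $2^6-1=3^2\cdot 7$ having no primitive factor. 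This contradiction establishes the theorem for $n\geq 3$.

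The two small cases $n=1,2$ are handled directly. For $n=1$, $\Phi_1(a,b)=a-b$ and a primitive prime divisor is any prime dividing $a-b$, which exists unless $a-b=1$. For $n=2$, $\Phi_2(a,b)=a+b$ and a primitive prime divisor is an odd prime factor of $a+b$, since any common divisor of $a-b$ and $a+b$ divides $2\gcd(a,b)=2$; such a factor exists unless $a+b$ is a power of $2$. These recover the three exceptional families. I expect the main obstacle to be the multiplicity analysis of the second step, namely establishing $n=dp^{j}$ together with $v_p(\Phi_n(a,b))=1$ and isolating the $p=2$ anomaly, and then making the lower bound in the third step sharp enough that $(2,1,6)$ emerges as the single borderline case rather than one of several.
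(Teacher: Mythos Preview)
The paper does not supply its own proof of this lemma: it is quoted as Zsigmondy's theorem, with references to Zsigmondy, Bang, Dickson, Kanold, and Shapiro for the proof. Your sketch is precisely the classical cyclotomic-polynomial argument found in those sources, and it is essentially correct.

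Two small points worth tightening if you write it out in full. First, when $n=2^j$ with $j\geq 2$ and $a,b$ are both odd, the non-primitive prime factor of $\Phi_n(a,b)$ is $P(n)=2$, not an odd prime as you state; the conclusion survives because $\Phi_{2^j}(a,b)=a^{2^{j-1}}+b^{2^{j-1}}\geq 2^{2^{j-1}}+1>2$ anyway. Second, the analytic lower bound in your third step is the genuinely delicate part: the crude estimate $\Phi_n(a,b)\geq (a-b)^{\phi(n)}$ is useless when $a-b=1$, so one needs a sharper inequality such as $\Phi_n(a,b)\geq (a-1)^{\phi(n)}$ for $n\geq 3$ (or a direct product estimate over the primitive roots), together with a short finite check of the cases where $\phi(n)$ is very small, to isolate $(a,b,n)=(2,1,6)$ as the unique survivor. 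You already flag this as the main obstacle, and that assessment is accurate.
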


Let $o_p(a)$ denote the residual order of $a\pmod{p}$.
Lemma \ref{lm21} immediately gives the following result.
\begin{lem}\label{lm22}
If $(a^e-1)/(a-1)=p^{f_1}q^{f_2}$ for some integers $a, e, f_1, f_2$
and primes $p<q$, then we have $(a, e, p, q, f_1, f_2)=(2, 6, 3, 7, 2, 1)$,
$e=r$ or $e=r^2$ for some prime $r$.  Moreover, in the case $e=r$, then we have $p=r, o_q(a)=r$
or $o_p(a)=o_q(a)=r$.
In the case $e=r^2$, we have $(p, q, f_1, f_2)=((a^r-1)/(a-1), (a^{r^2}-1)/(a^r-1), 1, 1)$
or $(a, e, p, f_1)=(2^m-1, 4, 2, m+1)$ for some integer $m$.
\end{lem}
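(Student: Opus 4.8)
The argument runs entirely on Lemma~\ref{lm21} (Bang--Zsigmondy) together with a few elementary $p$-adic valuation estimates. The first task is to pin down the multiplicative shape of $e$. For every divisor $d\mid e$ with $d>1$ that is not one of the exceptional pairs in Lemma~\ref{lm21}, choose a primitive prime factor $\ell_d$ of $a^d-1$; since $o_{\ell_d}(a)=d>1$ one has $\ell_d\nmid a-1$ and $\ell_d\mid(a^e-1)/(a-1)$, and distinct $d$ give distinct $\ell_d$. Hence $\omega\big((a^e-1)/(a-1)\big)$ is at least the number of divisors of $e$ exceeding $1$ minus the number of exceptional divisors among them, and the exceptional divisors are confined to $d=2$ (only when $a+1$ is a power of $2$) and $d=6$ (only when $a=2$). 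As the left side is at most $2$ by hypothesis, $e$ can have at most three divisors exceeding $1$ unless one of these exceptions intervenes, so one is left to examine $e\in\{r^3,\,rs,\,6\}$ with $r<s$ primes.

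Dispatching those: $(a,e)=(2,6)$ is settled at a glance by $(2^6-1)/1=63=3^2\cdot 7$, the listed exceptional tuple. If $e=rs$ or $e=r^3$ with no exception available, three primitive primes already force $\omega\ge 3$, a contradiction. The only survivors are $e=2s$ or $e=8$ with $a+1$ a power of $2$; here I would factor $(a^{2s}-1)/(a-1)=\tfrac{a^s-1}{a-1}(a^s+1)$ (respectively $(a^8-1)/(a-1)=(a+1)(a^2+1)(a^4+1)$), observe that after extracting the power of $2$ the remaining cofactors are odd, each exceed $1$, and are pairwise coprime since their pairwise gcds divide $2$, so again $\omega\ge 3$. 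This leaves exactly $e=r$ or $e=r^2$.

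For $e=r$: any prime $\ell\mid(a^r-1)/(a-1)$ has $o_\ell(a)\in\{1,r\}$, and $o_\ell(a)=1$ forces $\ell\mid a-1$, whence $(a^r-1)/(a-1)\equiv r\pmod\ell$ gives $\ell=r$ with $v_r\big((a^r-1)/(a-1)\big)=1$ by the lifting-the-exponent identity. Lemma~\ref{lm21} also supplies a primitive prime of order $r$, which is $\equiv 1\pmod r$ and hence exceeds $r$; combined with $p<q$ and $\omega\le 2$ this is precisely the stated dichotomy. For $e=r^2$: write $(a^{r^2}-1)/(a-1)=A\cdot B$ with $A=\Phi_r(a)$, $B=\Phi_{r^2}(a)$, both $>1$. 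A common prime $\ell$ of $A,B$ divides $a^r-1$, so $B\equiv r\pmod\ell$ gives $\ell=r$, and $v_r$ on $\gcd(A,B)$ is at most $1$ (lifting-the-exponent for odd $r$; the congruence $a^2+1\equiv 2\pmod 4$ for $r=2$), so $\gcd(A,B)\in\{1,r\}$. If $\gcd(A,B)=1$, coprimality forces $\{p^{f_1},q^{f_2}\}=\{A,B\}$; each factor, being a prime power, either has its prime of order $1$ (hence equal to $r$, so the exponent is $1$) or is a repunit $(b^r-1)/(b-1)$ with $b=a$ or $b=a^r$, which is not a perfect power by the Bugeaud--Mignotte results recalled in the introduction, so $f_1=f_2=1$, and $A<B$, $p<q$ give $(p,q,f_1,f_2)=(A,B,1,1)$. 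If $\gcd(A,B)=r$, then $r^2\mid AB$ and, all other prime factors of $A,B$ being $>r$, $p=r$; for odd $r$ one has $v_r(A)=v_r(B)=1$ while $A/r,B/r>1$, so $q$ divides both $A$ and $B$, contradicting $\gcd(A,B)=r$; hence $r=2$, $A=a+1$, $B=a^2+1$ with $v_2(B)=1$ and $\gcd(a+1,a^2+1)\mid 2$, forcing $q\nmid a+1$, i.e.\ $a+1=2^m$, $p=2$, $f_1=v_2\big((a^4-1)/(a-1)\big)=m+1$ --- the second alternative.

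The main obstacle is the exponent control in the coprime $e=r^2$ case: Zsigmondy alone says nothing about whether $\Phi_r(a)$ or $\Phi_{r^2}(a)$ can be a proper prime power, and ruling this out genuinely needs the Nagell--Ljunggren input supplied by the Bugeaud--Mignotte results, with a little extra attention in the borderline instances. A lesser nuisance is the $2$-adic bookkeeping needed to neutralise the ``$a+1$ a power of $2$'' exception of Lemma~\ref{lm21} when sieving the possible shapes of $e$.
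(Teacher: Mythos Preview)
The paper itself gives no proof: it simply declares the lemma an immediate consequence of Lemma~\ref{lm21} (Zsigmondy). Your argument is therefore far more detailed than anything in the paper, and for the structural claims --- that $e\in\{r,r^2\}$ apart from the $(2,6)$ exception, and the dichotomy in the $e=r$ case --- it is correct and is exactly the Zsigmondy counting the author has in mind.

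The gap is in the $e=r^2$, $\gcd(A,B)=1$ branch, where you assert $f_1=f_2=1$ by appealing to Bugeaud--Mignotte. The result recalled in the introduction (and stated as Lemma~\ref{lm23}) only covers bases $x=z^t$ with $z\le 10$; for general $a$ the Nagell--Ljunggren equation is open, so you cannot exclude $\Phi_r(a)$ or $\Phi_{r^2}(a)$ being a proper prime power. Concretely, $\Phi_3(18)=7^3$ and $\Phi_5(3)=11^2$ are proper prime powers, and nothing short of an explicit factorisation of $\Phi_9(18)$ and $\Phi_{25}(3)$ rules these out as counterexamples to the lemma (in fact $\Phi_{25}(3)=8951\cdot 391151$, so that one is safe, but this is not ``immediate from Zsigmondy''). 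You yourself flag this as ``the main obstacle'' and invoke Bugeaud--Mignotte, but that invocation does not reach general $a$.

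This is arguably a defect in the lemma as stated rather than in your proof: the $f_1=f_2=1$ clause is not known to follow for arbitrary $a$, and the paper never uses it outside $a\in\{2,3,5\}$, where your Bugeaud--Mignotte argument does apply --- provided you also dispose of the borderline $a=3$, $r=5$ case by the explicit check above, which your phrase ``a little extra attention in the borderline instances'' gestures at but does not carry out.
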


The following lemma is proved in \cite{BM}, as mentioned in the introduction.
\begin{lem}\label{lm23}
Let $a, e, x, f$ be positive integers with $a, x, f>1$ and $e>2$.
The equation $(a^e-1)/(a-1)=x^f$ has no solution but $(a, e, x, f)=(3, 5, 11, 2), (7, 4, 20, 2)$
in integers $2\leq a\leq 10, e>2, x>1, f>1$.
\end{lem}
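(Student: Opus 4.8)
The plan is built on the standard dichotomy for the Nagell--Ljunggren equation. Since an $f$-th power is also an $r$-th power for every prime $r\mid f$, I would first assume $f$ to be prime, and note that both solutions in the statement, $(3^5-1)/2=11^2$ and $(7^4-1)/6=20^2$, have $f=2$. Thus the whole lemma amounts to two assertions: the case $f=2$ yields exactly these two solutions (for \emph{every} $a$, not only $a\le 10$), while the case $f\ge 3$ yields none in the range $2\le a\le 10$.

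The case $f=2$ I would settle at once by quoting Ljunggren's classical theorem: $(a^e-1)/(a-1)=x^2$ with $e>2$ has only the solutions $(a,e,x)=(3,5,11)$ and $(7,4,20)$. This is uniform in $a$, so it accounts for the two exceptions and nothing more; in particular it already absorbs the composite exponent $e=4$.

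The real work is the case $f\ge 3$. By the usual descent I would reduce to $e$ prime: if $e$ is composite, a repunit $(a^m-1)/(a-1)$ with $m$ a proper divisor of $e$ splits off as a nearly coprime factor of $x^f$, hence is itself an $f$-th power up to a bounded correction supported on primes dividing $e$, giving a smaller instance \emph{with the same base} $a\le 10$; the residual exponent $e=4$ is checked directly from $(a^4-1)/(a-1)=(1+a)(1+a^2)$. For $e$ prime I would then extract arithmetic constraints from the factorisation $(a^e-1)/(a-1)=\prod_{d\mid e,\,d>1}\Phi_d(a)$ together with Lemma \ref{lm21}, which pins down which primes can occur and with what multiplicity and so links $f$ to $e$ and to the primes dividing $a(a-1)$; the diagonal case $e=f$ is eliminated by elementary size considerations, since $a^{e-1}<(a^e-1)/(a-1)<a^e$ confines $x$ to a short interval below $a$. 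For the surviving configurations I would pass to the linear form
\begin{equation}
\Lm=f\log x-e\log a+\log(a-1),
\end{equation}
which satisfies $\abs{\Lm}=\abs{\log(1-a^{-e})}\asymp a^{-e}$, so $\log\abs{\Lm}\approx -e\log a$. Inserting $a_1=x,\ a_2=a,\ a_3=a-1$ with $b_1=f,\ b_2=-e,\ b_3=1$ into Matveev's bound (Lemma \ref{lmll}), and using $\log x\approx\frac{e-1}{f}\log a$ to estimate $B$ and $\Omega$, the resulting inequality bounds $e$ by an explicit constant for each fixed base $a\in\{3,\dots,10\}$ and each $f$. The base $a=2$ is treated separately, the equation there being $2^e-1=x^f$, which has no solution with $e>1$, $f>1$. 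A finite computation over the remaining bounded ranges of $(a,e,f)$ then closes the argument.

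The hard part is exactly this case $f\ge 3$. The cyclotomic descent alone only constrains the \emph{shape} of a hypothetical solution; converting that into a contradiction needs the transcendence estimate, and Lemma \ref{lmll} is weakest precisely for the small exponents $f=3$ and for $e=4$, where the gap between the lower bound $-e\log a$ and Matveev's upper bound is narrowest. Sharpening the linear-forms input---or supplementing it with congruence and quadratic-residue obstructions---so that only a genuinely finite, checkable set of $(a,e,f)$ survives is the crux, and is the content of the work of Bugeaud and Mignotte.
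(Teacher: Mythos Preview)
The paper does not prove this lemma at all: it simply states it and attributes it to Bugeaud and Mignotte \cite{BM} (``The following lemma is proved in \cite{BM}, as mentioned in the introduction''). So there is nothing to compare your argument against beyond that citation.

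Your outline is a fair summary of the standard strategy underlying \cite{BM}: Ljunggren's theorem disposes of $f=2$ uniformly in $a$, and for $f\ge 3$ one combines cyclotomic/arithmetic constraints with a lower bound for the linear form $\Lm=f\log x-e\log a+\log(a-1)$ to bound $e$ for each base $a\le 10$, leaving a finite check. Two places where your sketch is thinner than an actual proof: the descent from composite $e$ to prime $e$ is not as clean as you suggest, since the factors $(a^m-1)/(a-1)$ and $(a^e-1)/(a^m-1)$ share a gcd dividing $e/m$ and the ``bounded correction'' must be tracked explicitly; and the ``diagonal'' case $e=f$ is not eliminated purely by the size inequality $a^{e-1}<(a^e-1)/(a-1)<a^e$ without a short finite check of the remaining candidates for $x$. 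Since you yourself close by pointing to Bugeaud and Mignotte for the sharp linear-forms work, your proposal and the paper ultimately agree: this lemma is quoted from \cite{BM} rather than proved here.
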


Using results mentioned in the introduction,
we can immediately solve some special case of our main theorem.
\begin{lem}\label{lm24}
Choose $a<b$ from the first three primes $2, 3, 5$.
If $(a^e-1)/(a-1)=p^k$ and $(b^f-1)/(b-1)=p^l$ for some integers $e, f, k, l$
and some prime $p$, then $(a^e, b^f)=(2^5, 5^3)$ and $p=31, k=l=1$.
\end{lem}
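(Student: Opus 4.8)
The plan is to handle the three cases $(a,b)\in\{(2,3),(2,5),(3,5)\}$ in turn. In each, the first step is to use Lemma~\ref{lm23} to pin down the possible shapes of the two given prime-power representations, and the second is to reduce what remains to a coincidence of two of the quantities $\sigma(2^{n_1}),\sigma(3^{n_2}),\sigma(5^{n_3})$ --- which, as recalled in the introduction, is impossible except for $\sigma(2^4)=\sigma(5^2)=31$.

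I would begin with a preliminary observation, valid for $a\in\{2,3,5\}$. We may assume $e,f\ge 2$ (equivalently $k,l\ge 1$), since otherwise one of the left-hand sides is $1$. If $(a^e-1)/(a-1)=p^k$ with $e\ge 2$, then either $k=1$ and $(a^e-1)/(a-1)$ is itself prime, or $(a,e,p,k)\in\{(3,2,2,2),(3,5,11,2)\}$. Indeed, when $e\ge 3$ and $k\ge 2$ this is Lemma~\ref{lm23} --- whose only exception with $2\le a\le 10$ that concerns us is $a=3$, giving $(3^5-1)/2=11^2$ --- while for $e=2$ the value equals $a+1\in\{3,4,6\}$, i.e.\ a prime, $2^2$, or a non-prime-power. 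In particular $k=1$ whenever $a\in\{2,5\}$, and for $a=5$ one moreover has $e\ge 3$ (since $(5^2-1)/4=6$ is not a prime power), whence $(5^e-1)/4=p\ge 31$.

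Now the case eliminations. If $(a,b)=(2,3)$, then $2^e-1=p$ is prime, and for $(3^f-1)/2=p^l$ the observation leaves only $l=1$, because the exceptional values would force $p=2$ or $p=11$, neither of the form $2^e-1$. Hence $(3^f-1)/2=2^e-1$, that is $\sigma(3^{f-1})=\sigma(2^{e-1})$, which has no nontrivial solution --- contradiction. If $(a,b)=(3,5)$, then $(5^f-1)/4=p$ is a prime $\ge 31$, while $(3^e-1)/2=p^k$ forces $k=1$ (its two exceptional values, $4$ and $121$, are both below $31$), so $\sigma(3^{e-1})=\sigma(5^{f-1})$, again impossible. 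Finally, if $(a,b)=(2,5)$, then $2^e-1=p$ and $(5^f-1)/4=p$ are both prime, so $\sigma(2^{e-1})=\sigma(5^{f-1})$; by the Makowski--Schinzel theorem --- which rests on Wakulicz's solution of $2^n-5^m=3$ --- the only solution with $e\ge 2$ and $f\ge 3$ is $e-1=4$, $f-1=2$, giving $(a^e,b^f)=(2^5,5^3)$, $p=31$, and $k=l=1$, exactly the asserted exceptional solution.

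The case-by-case elimination is routine bookkeeping once Lemma~\ref{lm23} is in hand; the one point with genuine content is the final reduction, to Wakulicz's equation $2^n-5^m=3$, which simultaneously rules out the generic situation and produces the single surviving pair $(2^5,5^3)$. The other two reductions --- $\sigma(2^{n_1})=\sigma(3^{n_2})$ and $\sigma(3^{n_1})=\sigma(5^{n_2})$ having only the trivial solution --- are elementary, as recalled in the introduction.
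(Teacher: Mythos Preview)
Your proof is correct and mirrors the paper's: both reduce the case $\max(k,l)\ge 2$ via Lemma~\ref{lm23} to the two exceptions $(3^2-1)/2=2^2$ and $(3^5-1)/2=11^2$ (easily ruled out against the other base), and then settle the remaining case $k=l=1$ by the three coincidence results recalled in the introduction, the only nontrivial one being Makowski--Schinzel for $\sigma(2^a)=\sigma(5^c)$; the paper splits on whether $\max(k,l)\ge 2$ while you split on the pair $(a,b)$, but this is purely organisational. One slip to fix in your write-up: in the $(3,5)$ case you assert that ``$4$ and $121$ are both below $31$'', but $121>31$ --- the intended (and valid) point is that the corresponding \emph{primes} $2$ and $11$ are below $31$.
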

\begin{proof}
In the case $k=l=1$ and $(a^e-1)/(a-1)=(b^f-1)/(b-1)$,
as observed in the introduction, we have $(a^e, b^f)=(2^5, 5^3)$.

Lemma \ref{lm23} yields that the perfect power case
must arise from $(3^5-1)/2=11^2$ or $(3^2-1)/2=2^2$.
In this case, we must have $2^e-1=2$ or $11$ or $(5^f-1)/4=2$ or $11$,
which is clearly impossible.
\end{proof}

\section{Main Theory}

For convenience, we put $a_1=2, a_2=3, a_3=5$ and $e_1=a+1, e_2=b+1, e_3=c+1$.

\begin{lem}\label{lm31}
For each $i=1, 2, 3$, we have
\begin{equation}\label{eq11}
e_i\log a_i<E_i=C_i\log p\log q (\log\log p+C_{i+3}),
\end{equation}
where
$C_1=1.422\times 10^{10}, C_2=1.226\times 10^{12}, C_3=1.795\times 10^{12},
C_4=23.3, C_5=27.8, C_6=28.1$.
\end{lem}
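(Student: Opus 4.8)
The plan is to produce, for each $i$, a small linear form in logarithms that is both nonzero and provably tiny, and then feed it into Matveev's bound (Lemma \ref{lmll}) to extract an upper bound for $e_i$. First I would record the basic identity $\sigma(a_i^{e_i-1}) = (a_i^{e_i}-1)/(a_i-1) = p^{f_i}q^{g_i}$, so that
\[
\Lambda_i := f_i\log p + g_i\log q - e_i\log a_i = \log\!\left(1 - a_i^{-e_i}\right) - \log(a_i-1).
\]
For $i=1$ the constant term $\log(a_i-1)=\log 1 = 0$ vanishes, so $\Lambda_1$ is a genuine three-term (in fact effectively two-term after accounting for the $a_i$) linear form with very small absolute value $|\log(1-2^{-e_1})| < 2^{-e_1+1}$; for $i=2,3$ one writes $\log(a_i-1)$ itself as an integer combination of $\log a_1, \log a_2, \log a_3$ (namely $\log 2$ and $\log 4 = 2\log 2$), so $\Lambda_i$ becomes a linear form in at most the four logarithms $\log 2, \log 3, \log 5, \log p$ — or rather in $\log p$, $\log q$ and the fixed $\log a_j$'s — again with $|\Lambda_i| < a_i^{-e_i+1}$. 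The point is that in every case $\log|\Lambda_i| < -(e_i-1)\log a_i + O(1)$.

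Next I would check $\Lambda_i \neq 0$: this is immediate since $0 < 1 - a_i^{-e_i} < 1$ forces $\Lambda_i < 0$ strictly (and it is not $-\infty$), so Matveev's hypothesis that the logarithms are not all zero and the form is nonzero is satisfied. Then I apply Lemma \ref{lmll} with $n$ equal to the number of distinct primes appearing (at most $4$, possibly fewer if $p$ or $q$ coincides with one of $2,3,5$; taking $n=4$ is safe and only worsens constants). The parameters are $A_j = \max\{0.16,\log a_j\}$ for the fixed primes and $A = \log p$ (and $\log q$) for the unknown primes — here one uses $p<q$, or simply bounds both by $\log q$ and then relates $\log q$ to $\log p$, which is where the $\log\log p$ and the additive constants $C_4,C_5,C_6$ will enter. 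The quantity $B$ in Matveev is controlled by the exponents $f_i,g_i,e_i$; since $f_i\log p \le e_i\log a_i + O(1)$ and similarly for $g_i\log q$, one gets $\log B = O(\log e_i) + O(\log\log p)$, and the dominant term after taking logarithms is absorbed into the $\log\log p$ factor. Combining Matveev's lower bound $\log|\Lambda_i| > -C_1(n)(C_0+\log B)\max\{1,n/6\}\,\Omega$ with the elementary upper bound $\log|\Lambda_i| < -(e_i-1)\log a_i + \log a_i$ and rearranging yields exactly an inequality of the shape $e_i\log a_i < C_i \log p\log q(\log\log p + C_{i+3})$.

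The main obstacle — really the only non-routine part — is bookkeeping the constants so that the stated numerical values $C_1 = 1.422\times 10^{10}$, $C_2 = 1.226\times 10^{12}$, $C_3 = 1.795\times 10^{12}$, $C_4 = 23.3$, $C_5=27.8$, $C_6 = 28.1$ actually come out. Three subtleties drive the differences between $i=1$ and $i=2,3$: (a) for $i=1$ the constant term vanishes so one effectively has one fewer logarithm (and hence a much smaller $C_1(n)$), explaining why $C_1$ is two orders of magnitude below $C_2, C_3$; (b) for $i=2,3$ the term $\log(a_i-1)$ must be incorporated, nudging $n$ up and forcing the slightly larger additive constants $C_5, C_6$; (c) one must be careful, when $p$ or $q$ happens to equal $2$, $3$, or $5$, that the form does not collapse to something trivial — but in that degenerate situation Lemma \ref{lm22} or direct inspection handles it, and otherwise $\log p$ is genuinely an independent large parameter. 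I would carry out step (a) first to fix $C_1$, then repeat with the extra logarithm for $C_2$ and $C_3$, choosing $C_{i+3}$ at the end large enough to swallow all the lower-order terms ($C_0$, the $O(1)$ from $\log(a_i-1) \le \log 4$, and the slack in bounding $\log B$).
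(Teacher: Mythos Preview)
Your approach is essentially the paper's: form $\Lambda_i = f_i\log p + g_i\log q + \log(a_i-1) - e_i\log a_i = \log(1-a_i^{-e_i})$, bound $|\Lambda_i|$ trivially from above, apply Matveev with $n=3$ for $i=1$ (since $\log(a_1-1)=0$) and $n=4$ for $i=2,3$, and compare.

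One point in your write-up is muddled, and it is exactly the step that produces the specific constants. The factor $\log\log p$ does \emph{not} come from ``relating $\log q$ to $\log p$'' nor from $\log B$ already containing a $\log\log p$. In the paper one orders the logarithms with $\log q$ last, so that Matveev's $B$ is (up to $O(1)$) $e_i\log a_i/\log q$; combining the two bounds for $\Lambda_i$ then yields an inequality of the self-referential shape
\[
X \;<\; A\bigl(C_0+\log X\bigr),\qquad X=\frac{e_i\log a_i}{\log q},\quad A=C'\log p,
\]
with $C'$ an explicit numerical constant coming from $C_1(n)\prod_j\log a_j$. Solving $X<A(C_0+\log X)$ for large $A$ gives $X<cA\log A$, and it is here that $\log A=\log C'+\log\log p$ appears: the additive constants $C_4,C_5,C_6$ are precisely (slight roundings of) $\log C'$, and the leading constants $C_i$ are $cC'$. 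Your sketch has all the right ingredients, but as written it would not lead you to the stated numerical values without this regrouping.
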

\begin{proof}
Let $\Lm_i=f_i\log a_i+g_i\log q+\log (a_i-1)-e_i\log 2=\log(1-a_i^{-e_i})$
for $i=1, 2, 3$.  It immediately follows from Matveev's theorem that
\begin{equation}
-\log\abs{\Lm_1}<C(3)\left(C_0+\log\frac{e_1\log 2}{\log q}\right)\log 2\log p\log q
\end{equation}
and
\begin{equation}
-\log\abs{\Lm_j}<C(4)\left(C_0+\log\frac{e_j\log a_j}{\log q}\right)\log 2\log a_j\log p\log q
\end{equation}
for $j=2, 3$.

Now we shall show (\ref{eq11}) in the case $i=1$.
We may assume that $e_1>10^{10} \log q/\log 2$.
Since $0<\abs{\Lm_1}=-\log(1-2^{-e_1})<\frac{1}{2^{e_1}-1}$, we have
$-\log\abs{\Lm_1}>\log(2^{e_1}-1)>(1-10^{-10})e_1\log 2$.
Combining upper and lower bounds for $\Lm_1$, we obtain
\begin{equation}
\begin{split}
\frac{e_1\log 2}{\log q}< & (1+10^{-10})\left(C_0+\log\frac{e_1\log 2}{\log q}\right)C(3)\log 2\log p \\
< & 1.244\times 10^{10}\log p\log\frac{e_1\log 2}{\log q}.
\end{split}	
\end{equation}
Hence, observing that $1.244\times 10^{10}\log p\geq 1.244\times 10^{10}\log 2$, we obtain
\begin{equation}
\begin{split}
\frac{e_1\log 2}{\log q}<& 1.143\times (1.244\times 10^{10}\log p)\log(1.244\times 10^{10}\log p) \\
<& 1.422\times 10^{10}(\log\log p+23.3),
\end{split}
\end{equation}
giving (\ref{eq11}) in the case $i=1$.

Next we shall prove (\ref{eq11}) in the case $i=2$.
We may assume that $e_2>10^{10} \log q/\log 3$ as with the previous case.
Since $0<\abs{\Lm_2}=-\log(1-3^{-e_2})<\frac{1}{3^{e_2}-1}$, we have
$-\log\abs{\Lm_2}>\log(3^{e_2}-1)>(1-10^{-10})e_2\log 3$.
From $0<\abs{\Lm_2}=-\log(1-3^{-e_2})<\frac{1}{3^{e_2}-1}$, we see that
$-\log\abs{\Lm_2}>\log(3^{e_2}-1)\geq (1-10^{-10})e_2\log 3$ and therefore
\begin{equation}
\begin{split}
\frac{e_2\log 3}{\log q}< & (1+10^{-10})\left(C_0+\log\frac{e_2\log 3}{\log q}\right)C(4)\log 2\log 3\log p \\
< & 1.089\times 10^{12}\log p\log\frac{e_2\log 3}{\log q}.
\end{split}
\end{equation}
This gives (\ref{eq11}) in the case $i=2$.

Similarly, (\ref{eq11}) in the case $i=3$ follows from
\begin{equation}
\begin{split}
\frac{e_3\log 5}{\log q}< & (1+10^{-10})\left(C_0+\log\frac{e_3\log 5}{\log q}\right)C(4)\log 2\log 5\log p \\
< & 1.595\times 10^{12}\log p\log\frac{e_3\log 5}{\log q}.
\end{split}
\end{equation}

A similar argument yields (\ref{eq11}) in the case $i=3$.  This completes the proof of the lemma.
\end{proof}

Next, we shall show that we cannot have all of $a_i^{e_i}$'s large.

\begin{lem}\label{lm32}
Let $x$ be the smallest among $a_i^{e_i}$'s.
Let $h_1=f_2g_3-f_3g_2, h_2=f_3g_1-f_1g_3$ and $h_3=f_1g_2-f_2g_1$ and
$H=\max\abs{h_i}$.  Then
\begin{equation}\label{eq33}
\log x\leq \log\frac{7H}{4}+C(3)(C_0+\log((e_1+3)H))\log 2\log 3\log 5.
\end{equation}
\end{lem}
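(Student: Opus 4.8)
The plan is to eliminate $\log p$ and $\log q$ from the three equations, reduce to a single linear form in $\log 2,\log 3,\log 5$, and apply Matveev's estimate (Lemma~\ref{lmll}).

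Taking logarithms in the three relations $(a_i^{e_i}-1)/(a_i-1)=p^{f_i}q^{g_i}$ gives $f_i\log p+g_i\log q=L_i$ for $i=1,2,3$, where $L_i=\log\bigl((a_i^{e_i}-1)/(a_i-1)\bigr)$. The three $2$-dimensional vectors $(f_i,g_i)$ are linearly dependent, and the minors $h_1,h_2,h_3$ provide an explicit relation $\sum_i h_i(f_i,g_i)=(0,0)$; hence $\sum_i h_if_i=\sum_i h_ig_i=0$, and therefore $h_1L_1+h_2L_2+h_3L_3=0$. Writing $L_i=e_i\log a_i-\log(a_i-1)+\log(1-a_i^{-e_i})$ and using $a_1-1=1$, $a_2-1=2$, $a_3-1=4$, this becomes
\begin{equation}
\Lm:=(h_1e_1-h_2-2h_3)\log 2+h_2e_2\log 3+h_3e_3\log 5=-\sum_{i=1}^{3}h_i\log(1-a_i^{-e_i}).
\end{equation}
Since $\log 2,\log 3,\log 5$ are linearly independent over $\BQ$, the left side vanishes only when $h_2e_2=h_3e_3=h_1e_1-h_2-2h_3=0$, i.e.\ (as $e_1,e_2,e_3>0$) only when $H=0$; so in proving (\ref{eq33}) I may assume $H\geq1$, whence $\Lm\neq0$.

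For an upper bound, $0<-\log(1-t)<t/(1-t)$ with $t=a_i^{-e_i}$ gives $\abs{\Lm}\leq\sum_i\abs{h_i}/(a_i^{e_i}-1)\leq H\sum_i 1/(a_i^{e_i}-1)$; since $x$ is the least of the three prime powers $a_i^{e_i}$ and the other two are strictly larger, a short elementary estimate (treating separately the finitely many small values of $x$, for which $\log x$ is absolutely bounded and (\ref{eq33}) is trivial) yields $\abs{\Lm}<7H/(4x)$. For the lower bound I would apply Lemma~\ref{lmll} with $n=3$, $(a_1,a_2,a_3)=(2,3,5)$, $A_j=\log a_j$, coefficients $b_1=h_1e_1-h_2-2h_3$, $b_2=h_2e_2$, $b_3=h_3e_3$, $\Omega=\log 2\log 3\log 5$ and $\max\{1,n/6\}=1$; bounding the coefficients ($\abs{b_1}\leq(e_1+3)H$, $\abs{b_2}\leq e_2H$, $\abs{b_3}\leq e_3H$) feeds $\log B\leq\log((e_1+3)H)$ into the estimate, so that
\begin{equation}
\log\abs{\Lm}>-C(3)\bigl(C_0+\log((e_1+3)H)\bigr)\log 2\log 3\log 5.
\end{equation}
Combining $\abs{\Lm}<7H/(4x)$, i.e.\ $\log x<\log(7H/4)-\log\abs{\Lm}$, with this lower bound gives precisely (\ref{eq33}).

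The conceptual crux is the elimination that produces $h_1L_1+h_2L_2+h_3L_3=0$: once this is in hand, the argument is a controlled use of Matveev's theorem together with the elementary bound for $\log(1-a_i^{-e_i})$. After that, the only delicate points are routine — the estimate $\abs{\Lm}<7H/(4x)$, which rests on the minimality of $x$, and the correct choice of parameters together with the bound on $B$ in Lemma~\ref{lmll} — but they must be carried out cleanly to obtain the stated constants.
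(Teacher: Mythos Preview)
Your proof is correct and follows essentially the same route as the paper: form the multiplicative relation $\prod_i\bigl((a_i^{e_i}-1)/(a_i-1)\bigr)^{h_i}=1$ (equivalently your $\sum_i h_iL_i=0$), rewrite it as the linear form $\Lambda$ in $\log 2,\log 3,\log 5$, bound $\abs{\Lambda}$ above by $7H/(4x)$ and below via Matveev with $n=3$, and combine. Your handling of $\Lambda\neq 0$ and of the small-$x$ cases for the constant $7/4$ is slightly more explicit than the paper's, but the argument is otherwise identical.
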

\begin{proof}
We begin by observing that
\begin{equation}
(2^{e_1}-1)^{h_1}\left(\frac{3^{e_2}-1}{2}\right)^{h_2}\left(\frac{5^{e_3}-1}{4}\right)^{h_3}=1.
\end{equation}
Now we put
\begin{equation}
\begin{split}
\Lm&=(e_1h_1-h_2-2h_3)\log 2+e_2h_2\log 3+e_3h_3\log 5\\
&=h_1\log\frac{2^{e_1}}{2^{e_1}-1}+h_2\log\frac{3^{e_2}}{3^{e_2}-1}+h_3\log\frac{5^{e_3}}{5^{e_3}-1}.
\end{split}
\end{equation}
Then we have
\begin{equation}
0<\abs{\Lm}\leq H\left(\frac{1}{2^{e_1}-1}+\frac{1}{3^{e_2}-1}+\frac{1}{5^{e_3}-1}\right)\leq \frac{7H}{4x}
\end{equation}
and therefore
\begin{equation}\label{eq31}
\log\abs{\Lm}\leq -\log x+\log\frac{7H}{4}.
\end{equation}

It follows from the assumption $e_i>0$ that $\Lm\neq 0$.  Hence Matveev's lower bound gives
\begin{equation}\label{eq32}
\log\abs{\Lm}\geq -C(3)(C_0+\log((e_1+3)H))\log 2\log 3\log 5.
\end{equation}

Combining (\ref{eq31}) and (\ref{eq32}), we obtain (\ref{eq33}).
\end{proof}

The third step is to obtain upper bounds for each $e_i$.

\begin{lem}\label{lm33}
Unless $x=p=31$, we have $e_1<4.44\times 10^{52}, e_2<2.54\times 10^{54}$ and $e_3<2.55\times 10^{54}$
and $H<2.89\times 10^{68}$.
\end{lem}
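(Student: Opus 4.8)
The plan is to bound $\log p$ and $\log q$ by explicit constants, and then to read off the bounds on the $e_i$ from Lemma~\ref{lm31} and the bound on $H$ from an elementary estimate. First I would record that, from $p^{f_i}q^{g_i}=\sigma(a_i^{e_i})<a_i^{e_i}$, one has $f_i<e_i\log a_i/\log p$ and $g_i<e_i\log a_i/\log q$, so that for $i\neq j$
\begin{equation*}
\abs{f_ig_j-f_jg_i}<\frac{e_ie_j\log a_i\log a_j}{\log p\log q}<\frac{E_iE_j}{\log p\log q}
\end{equation*}
by Lemma~\ref{lm31}; taking the largest pair gives $H<E_2E_3/(\log p\log q)$. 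Hence $\log((e_1+3)H)$ is at most $2(\log\log p+\log\log q)$ plus an explicit constant (absorbing the minor terms $\log(\log\log p+C_{i+3})$). Feeding this into Lemma~\ref{lm32} and collecting terms, I would obtain $\log x\le M(\log\log p+\log\log q)+M_0$ with $M=2C(3)\log 2\log 3\log 5+O(1)$ and $M_0$ explicit.

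Next I would bound $p$ and $q$. Write $x=a_{i_0}^{e_{i_0}}$ for the smallest of the three prime powers, so $\sigma(x)=(x-1)/(a_{i_0}-1)=p^{f_{i_0}}q^{g_{i_0}}<x$. If both $p$ and $q$ occur in $\sigma(x)$, then $\log p+\log q<\log x\le M(\log\log p+\log\log q)+M_0$, which (as $\log t=o(t)$) bounds $\log p$ and $\log q$ explicitly. Otherwise $\sigma(x)$ is a power of a single prime, say $p$: if that exponent is at least $2$ then, checking $e_{i_0}\le 2$ by hand and using Lemma~\ref{lm23} for $e_{i_0}>2$, we are forced into $(a_{i_0},e_{i_0},p)=(3,5,11)$; if the exponent is $1$ then $p=(x-1)/(a_{i_0}-1)<x$, so $\log p\le M(\log\log p+\log\log q)+M_0$. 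In this last situation $q$ divides some $\sigma(a_j^{e_j})$ with $j\neq i_0$ — otherwise all three $\sigma$-values are powers of $p$, which by Lemma~\ref{lm24} is exactly the excluded configuration $x=p=31$ (there $\sigma(2^a)=\sigma(5^c)=31$ while $\sigma(3^b)$ may be an arbitrarily large prime). Applying Lemma~\ref{lm22} to that equation, and Lemma~\ref{lm23} too if $\sigma(a_j^{e_j})$ is a power of $q$, forces $e_j=r$ or $e_j=r^2$ with $r$ prime (or the sporadic case). In most subcases the attached information on residual orders ($p=(a_j^r-1)/(a_j-1)$ when $e_j=r^2$, or $r\mid p-1$) together with the minimality of $x$ bounds $e_j$, hence $\log q\le e_j\log a_j$, by a fixed power of $\log p$; combined with $\log p\le M(\log\log p+\log\log q)+M_0$ this gives $\log q\le F(\log\log q)$ for an explicit $F$, bounding $\log q$ and then $\log p$. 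The one stubborn subcase — $e_j=r$ prime with $o_p(a_j)=o_q(a_j)=r$, so that a priori $\log q$ is only $O(r)=O(p)$ — must be closed by applying Lemma~\ref{lm22} to the third equation as well, which over-constrains $r$.

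Finally, with $\log p,\log q$ at most an explicit $B$, Lemma~\ref{lm31} yields $e_i<E_i/\log a_i$, i.e.\ $e_1<4.44\times 10^{52}$, $e_2<2.54\times 10^{54}$, $e_3<2.55\times 10^{54}$, and then $H<E_2E_3/(\log p\log q)<2.89\times 10^{68}$. The hard part is the stubborn subcase just described together with the surrounding case distinction: one must bound the ``second'' prime without circularity, using Lemmas~\ref{lm22}--\ref{lm24}, the residual-order estimates and the minimality of $x$ — precisely the point where the family $x=p=31$ is unavoidable. What then remains is bookkeeping: carrying the explicit constants through Lemmas~\ref{lm31} and~\ref{lm32}, and solving $y\le M\log y+M_0$ to obtain the stated numbers.
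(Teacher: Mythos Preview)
Your proposal has a genuine gap in the case $q\nmid x$, precisely at the ``stubborn subcase'' you flag. When $\sigma(x)=p^{f_i}$ and $(a_j^{e_j}-1)/(a_j-1)=p^{f_j}q^{g_j}$ with $e_j=r$ prime and $o_p(a_j)=o_q(a_j)=r$, all you get from Lemma~\ref{lm22} is $r\mid p-1$; this bounds $r$ only by $p$, not by any power of $\log p$. Nothing forbids $p$ from being a \emph{small} primitive divisor of $a_j^r-1$ (say $p=2r+1$), in which case $\log p\asymp\log r$ while $e_j=r$ and $\log q\le r\log a_j$ are exponential in $\log p$. Your inequality $\log p\le M(\log\log p+\log\log q)+M_0$ then becomes $\log p\le M(\log\log p+\log r+\ldots)$ with $M$ of size $\sim C(3)\log 2\log 3\log 5\gg 1$, which does not close. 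Bringing in the third equation via Lemma~\ref{lm22} does not help: nothing prevents $e_k$ from being a prime $r'$ with the same behaviour, so there is no over-constraint.

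The paper handles this case by a completely different device that you are missing: a \emph{second} linear form in logarithms. Writing $x=(a_i^{e_i}-1)/(a_i-1)$, one sets
\[
\Lambda'=e_jh_j\log a_j+e_kh_k\log a_k-h_j\log(a_j-1)-h_k\log(a_k-1)+h_i\log x
=h_j\log\frac{a_j^{e_j}}{a_j^{e_j}-1}+h_k\log\frac{a_k^{e_k}}{a_k^{e_k}-1}.
\]
Lemma~\ref{lm24} (this is the sole place the exclusion $x=p=31$ is used) guarantees that both $(a_j^{e_j}-1)/(a_j-1)$ and $(a_k^{e_k}-1)/(a_k-1)$ are divisible by $q$, so $\abs{\Lambda'}<3H/(2q)$. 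Matveev applied to $\Lambda'$ as a form in the four logarithms $\log a_j,\log a_k,\log(a_j-1)(a_k-1),\log x$ gives $-\log\abs{\Lambda'}\le C(4)(C_0+\log(E_3H/\log x))\log 2\log 3\log 5\,\log x$, hence a bound on $\log q$ in terms of $\log x$. Combined with Lemma~\ref{lm32} this yields explicit bounds on $\log p$ and $\log q$, after which the $e_i$ and $H$ bounds follow exactly as you describe.
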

\begin{proof}
We may assume without the loss of generality that $p<q$.
We begin by considering $q\mid x$.  In this case, we have
\begin{equation}
\log q<\log x<\log\frac{7H}{4}+C(3)(C_0+\log((e_1+3)H))\log 2\log 3\log 5.
\end{equation}
We note that $H\leq C_2C_3\log p\log q(\log\log p+C_5)(\log\log p+C_6)$
since it follows from Lemma \ref{lm31} that $f_i<e_i\log a_i/\log p_i<C_i\log q(\log\log p+C_{i+3})$
and $g_i<e_i\log a_i/\log q_i<C_i\log p(\log\log p+C_{i+3})$.
Hence we obtain $\log p<\log q<4.35\times 10^{12}$.

Now we consider the case $p<q$ and $q\nmid x$.
Put $i$ to be the index such that $x=(a_i^{e_i}-1)/(a_i-1)$, $j, k$ be the others
and
\begin{equation}
\begin{split}
\Lm^\prime & =e_j h_j\log a_j+e_k h_k\log a_k-h_j\log(a_j-1)-h_k\log(a_k-1)+h_i\log x \\
& =h_j\log\frac{a_j^{e_j}}{a_j^{e_j}-1}+h_k\log\frac{a_k^{e_k}}{a_k^{e_k}-1}.
\end{split}
\end{equation}

It follows from Lemma \ref{lm24} that if $(a_j^{e_j}-1)/(a_j-1)=p^{f_j}$
or $(a_k^{e_k}-1)/(a_k-1)=p^{f_k}$, then $a_i^{e_i}=2^5$ or $5^3$ and $x=p=31$.
Hence we see that both numbers $(a_j^{e_j}-1)/(a_j-1), (a_k^{e_k}-1)/(a_k-1)$
must be divisible by $q$ unless $x=p=31$.

Thus we obtain
\begin{equation}\label{eq34}
0<\Lm^\prime<H\left(\frac{1}{a_j^{e_j}-1}+\frac{1}{a_k^{e_k}-1}\right)\leq \frac{3H}{2q}.
\end{equation}
As in the previous case, Matveev's theorem now gives
\begin{equation}\label{eq35}
\log\abs{\Lm^\prime}\geq -C(4)\left(C_0+\log\frac{E_3H}{\log x}\right)\log 2\log 3\log 5\log x.
\end{equation}

Combining (\ref{eq34}) and (\ref{eq35}), we obtain
\begin{equation}\label{eq36}
\log q\leq \log\frac{3H}{2}+C(4)\left(C_0+\log\frac{E_3H}{\log x}\right)\log 2\log 3\log 5\log x.
\end{equation}
Since $E_3=C_3\log p\log q (\log\log p+C_6)\leq C_3\log x\log q(\log\log x+C_6)$
and $H<C_2C_3(\log q)^2(\log\log q+C_5)(\log\log q+C_6)$,
combining (\ref{eq33}) and (\ref{eq36}), we obtain $\log q<3.45\times 10^{27}$.
Moreover, we have
\begin{equation}
\begin{split}
\log p= & \log x \\
< & \log\frac{7H}{4}+C(3)(C_0+\log((e_1+3)H))\log 2\log 3\log 5 \\
< & 7.22\times 10^{12}.
\end{split}
\end{equation}

So that, we conclude that in both cases, we have $\log p<7.22\times 10^{12}$ and $\log q<3.45\times 10^{27}$.
Now Lemma \ref{lm31} immediately gives that $e_1<4.44\times 10^{52}, e_2<2.54\times 10^{54}$ and $e_3<2.55\times 10^{54}$.
Finally, the upper bound $H<2.89\times 10^{68}$ follows from $H<C_2C_3(\log p)(\log q)(\log\log p+C_6)(\log\log q+C_5)$.
\end{proof}

Now, using the lattice reduction algorithm, we shall obtain feasible upper bounds.
\begin{lem}\label{lm34}
We have $\log x<354.8$.
Moreover, if $p<q$ and $q$ divides $x$, then $\log x<249.5$.
\end{lem}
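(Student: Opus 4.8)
The plan is to run the lattice reduction ($L^{3}$) algorithm on the linear form
$\Lm=(e_1h_1-h_2-2h_3)\log 2+e_2h_2\log 3+e_3h_3\log 5$
of the proof of Lemma \ref{lm32}, and then to iterate the reduction against Lemmas \ref{lm31} and \ref{lm33} until the bound on $\log x$ stabilizes. Recall $0<\abs{\Lm}\leq 7H/(4x)$, so $\log x\leq\log(7H/4)-\log\abs{\Lm}$. The exceptional case $x=p=31$ of Lemma \ref{lm33} satisfies $\log 31<249.5$ and is harmless, and if $H=0$ then the cross product of $(f_1,f_2,f_3)$ and $(g_1,g_2,g_3)$ vanishes, so each $\sigma(a_i^{e_i})$ is a power of one common integer; by Lemma \ref{lm23} and the remarks in the introduction this forces $x$ to be bounded. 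Hence we may assume $H\geq 1$, whence $\Lm\neq 0$ because $\log 2,\log 3,\log 5$ are $\BQ$-linearly independent.

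For a single reduction step, put $B_1=e_1h_1-h_2-2h_3$, $B_2=e_2h_2$, $B_3=e_3h_3$ and let $X$ be a common upper bound for $\abs{B_1},\abs{B_2},\abs{B_3}$ derived from the currently available bounds on the $e_i$ and on $H$ (initially those of Lemma \ref{lm33}, giving $X$ of size about $10^{123}$). Pick a large integer $C$, of size roughly $X^{3}$ and increased until the test below succeeds, and apply the $L^{3}$ algorithm to the lattice $\Gamma\subset\BZ^{3}$ spanned by the columns of
\begin{equation*}
\begin{pmatrix}1 & 0 & 0\\ 0 & 1 & 0\\ \floor{C\log 2} & \floor{C\log 3} & \floor{C\log 5}\end{pmatrix}.
\end{equation*}
The image of $(B_1,B_2,B_3)^{t}$ under this matrix is a nonzero point of $\Gamma$ of square norm at most $2X^{2}+(\abs{C\Lm}+3X)^{2}$, so if $\mathbf{b}_1$ is the first vector of an $L^{3}$-reduced basis, then $\wenvert{\mathbf{b}_1}^{2}/4\leq\lambda(\Gamma)^{2}\leq 2X^{2}+(\abs{C\Lm}+3X)^{2}$; as soon as $\wenvert{\mathbf{b}_1}>2\sqrt{11}\,X$ this can be solved for $\abs{\Lm}$ and gives $-\log\abs{\Lm}\leq 2\log X+O(1)$. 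Combined with $\log x\leq\log(7H/4)-\log\abs{\Lm}$ we obtain a first bound $\log x\leq\log(7H/4)+2\log X+O(1)$, already far smaller than the bounds of Lemma \ref{lm33}.

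Now iterate. Once $\log x$ is bounded, re-examine the dichotomy of Lemma \ref{lm33}: if $q\mid x$ then $\log p<\log q\leq\log x$, whereas if $q\nmid x$ then Lemmas \ref{lm23} and \ref{lm24} give $x=p$ (up to the bounded exceptions above), so $\log p=\log x$, and the auxiliary form $\Lm'$ of Lemma \ref{lm33} re-bounds $\log q$. Feeding the improved bounds on $\log p$, $\log q$ back through Lemma \ref{lm31} and the inequalities $f_i<e_i\log a_i/\log p$, $g_i<e_i\log a_i/\log q$ yields much smaller bounds for the $e_i$, for $H$, and hence for $X$; one also uses that the exponent $e_i$ realizing the minimum satisfies $e_i\log a_i<\log x+\log 8$, so it is small. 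Repeating the reduction step a couple of times makes the bound settle at $\log x<354.8$. In the sub-case $q\mid x$ one has $\log q\leq\log x$ at every stage, so $\log p\log q\leq(\log x)^{2}$; this shrinks every intermediate quantity and the same iteration terminates at the stronger $\log x<249.5$.

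The main obstacle is the explicit bookkeeping that makes this feedback loop rigorous: one must carry the large constants $C_1,\dots,C_6$ of Lemma \ref{lm31} through each round, control the polylogarithmic factors, and choose $C$ so that the test $\wenvert{\mathbf{b}_1}>2\sqrt{11}\,X$ is met. The sensitive point is the case $q\nmid x$, where $\log q$ does not descend to a small value (it stabilizes only at size about $10^{15}$) and where $e_2,e_3$ need not be small; there one exploits that, since one $\sigma$-value equals $p$, two of the minors $h_i$ collapse to a single $g_j$ of size $O(e_j\log a_j/\log q)$, so that the corresponding $\abs{B_i}$ stay small enough for the reduction to close. Finally, the finitely many configurations with $p$ or $q$ small---where $f_i<e_i\log a_i/\log p$ is too weak---are treated directly.
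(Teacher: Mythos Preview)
Your approach is essentially the same as the paper's: apply LLL to the three-term form $\Lambda$ of Lemma~\ref{lm32} using the crude bounds of Lemma~\ref{lm33} to get a first bound $\log x<727.94$, then split into the cases $q\mid x$ and $q\nmid x$, feed the resulting bounds on $\log p,\log q$ back through Lemma~\ref{lm31} (and through~\eqref{eq36} for $\Lambda'$ when $q\nmid x$) to shrink the coefficient bound $X$, and run LLL once more to reach $354.8$ and $249.5$ respectively. The paper carries this out with explicit choices ($C=10^{370},10^{157},10^{111}$ and $\gamma=2$ in de~Weger's formulation) rather than your asymptotic description; your lattice differs from the paper's only by the harmless scaling factor $\gamma$. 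Two small remarks: your separate treatment of $H=0$ is a point the paper glosses over, and your final caveat about ``$p$ or $q$ small'' is unnecessary, since $f_i<C_i\log q(\log\log p+C_{i+3})$ follows from Lemma~\ref{lm31} uniformly in $\log p$.
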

\begin{proof}
We begin by noting that we can assume $x\neq 31$ without the loss of generality.

In order to reduce our upper bounds, we use the LLL lattice reduction algorithm
introduced in \cite{LLL}.
Let $M$ be the matrix defined by $m_{12}=m_{13}=m_{21}=m_{23}=0$
and $m_{11}=m_{22}=\gamma$ and $m_{3i}=\floor{C\gamma\log a_i}$ for $i=1, 2, 3$,
where $C$ and $\gamma$ are constants chosen later.
Let $L$ denote the LLL-reduced matrix of $M$ and $l(L)$ the shortest length
of vectors in the lattice generated by the column vectors of $L$.

From the previous lemma, we know that $\Lm$ has coefficients with absolute values
at most $H\max\{e_1+3, e_2, e_3\}<7.37\times 10^{122}$.
It is implicit in the proof of Lemma 3.7 of de Weger's book \cite{Weg}
that if $l(\Gamma)>X_1\sqrt{16+4\gamma}$ and $X_1\geq 7.37\times 10^{122}$, then $\abs{\Lm}>X_1/(C\gamma)$.

Taking $C=10^{370}, \gamma=2$, we can confirm that $l(\Gamma)>X_1\sqrt{16+4\gamma}$
and therefore we obtain that $\abs{\Lm}>3.685\times 10^{-248}$.
Hence we have
\begin{equation}
\log x<\log\frac{7H}{4}-\log\abs{\Lm}<727.94.
\end{equation}
We choose the index $i$ such that $x=(a_i^{e_i}-1)/(a_i-1)$ and let $j, k$ be the others.
From the above estimate for $x$, we derive that
\begin{equation}\label{eq37}
e_i\leq \floor{\frac{\log 2x}{\log a_i}}\leq 1051.
\end{equation}

We consider the case $p<q$ and $q$ does not divide $x$.
From (\ref{eq36}) we obtain $\log q<3.337\times 10^{17}$.
Lemma \ref{lm31} gives that
\begin{equation}
\begin{split}
\abs{h_i}< & C_2 C_3\log x\log q(\log\log x+C_6)(\log\log q+C_5)\\
< & 1.264\times 10^{48},
\end{split}
\end{equation}
\begin{equation}
\abs{h_j}=\abs{f_ig_k}<C_3\log x(\log\log q+C_6)<8.944\times 10^{16},
\end{equation}
\begin{equation}
\abs{e_j}<C_3\log x\log q(\log\log q+C_6)/\log 2<4.306\times 10^{34}
\end{equation}
and similar upper bounds hold for $\abs{h_k}$ and $\abs{e_k}$, respectively.
Hence $\Lm$ has coefficients with absolute values at most $3.852\times 10^{51}$.
Using the LLL-reduction again with $C=10^{157}$ and $\gamma=2$,
we obtain $\abs{\Lm}>1.926\times 10^{-106}$
and therefore $\log x<\log(7H/4)-\log \abs{\Lm}<354.8$.

Next, we consider the case $p<q$ and $q$ divides $x$.
In this case, we have $\log p<\log q\leq \log x<727.94$.
We choose the index $i$ such that $x=(a_i^{e_i}-1)/(a_i-1)$ and let $j, k$ be the others.
Lemma \ref{lm31} gives that
\begin{equation}\label{eq38}
\abs{h_i}<C_2 C_3\log^2 x(\log\log x+C_5)(\log\log x+C_6)<1.392\times 10^{33},
\end{equation}
\begin{equation}
\abs{h_j}\leq \max\abs{f_ig_k, f_kg_i}<C_3\log x(\log\log x+C_6)<4.533\times 10^{16},
\end{equation}
\begin{equation}\label{eq39}
\abs{e_j}<C_3\log^2 x(\log\log x+C_6)/\log 2<4.761\times 10^{19}
\end{equation}
and similar upper bounds hold for $\abs{h_k}$ and $\abs{e_k}$, respectively.
Combining these upper bounds with (\ref{eq37}),
we see that $\Lm$ has coefficients with absolute values at most $2.159\times 10^{36}$.
We use the LLL-reduction again with $C=10^{111}$ and $\gamma=2$,
we obtain $\abs{\Lm}>1.079\times 10^{-75}$
and therefore $\log x<\log(7H/4)-\log \abs{\Lm}<249.5$.  This proves the lemma.
\end{proof}

\section{The final step}

The final step is checking all possibilities of $x$.
We note that from The Cunningham Project (see \cite{Wag} or \cite{Cro}),
we know all prime factors of $x$'s below our upper bounds.

For $x=(a_i^{e_i}-1)/(a_i-1)$, we should check the residual orders of
the other prime $a_j$ modulo $x$.  A summary is given in Tables \ref{tbl1}-\ref{tbl6},
where $\textrm{P}n$ denotes a prime with $n$ digits and $(n)$ indicates that
the residual order is a multiple of $n$.  For example, putting $x=2^{347}-1=pq$ with $p<q$,
$o_q(3)$ is divisible by $6$ since $q-1$ is divisible by $2^3\times 3^2$
and $3^{(q-1)/8}, 3^{(q-1)/3}\not\equiv 1\pmod{q}$, although $3^{(q-1)/4}\equiv 1\pmod{q}$,
which yields that $(3^{e_2}-1)/2=p^{f_2}q^{g_2}$ with $g_2>0$ is impossible.

If $p=x=2^{e_1}-1$ is prime, then $e_1\leq 511$
and therefore $e_1$ must belong to the set
\begin{equation*}
\{2, 3, 5, 7, 13, 17, 19, 31, 61, 89, 107, 127\}.
\end{equation*}
Among them, there exists no $e_1$ such that the $o_x(3)=1$, a prime or the square of a prime,
as we can see from Table \ref{tbl1}.
Hence, by Lemma \ref{lm22}, we must have $(3^{e_2}-1)/2=q^{g_2}$.  By Lemma \ref{lm24},
$(5^{e_3}-1)/4$ must be divisible by $p=x$.
Hence, by Lemma \ref{lm22}, $o_x(5)=1$ or $o_x(5)$ must be a prime or a prime square
and therefore, from Table \ref{tbl1}, $e_1=e_3=2$ or $e_1=5, e_3=3$.
If $e_1=e_3=2$, then $(5^{e_3}-1)/4=6=2\times 3$ and therefore $(3^{e_2}-1)/2$ must be a power of $2$,
yielding that $e_2=2$.  If $e_1=5$ and $e_3=3$, then $p=31$ and $(3^{e_2}-1)/2=q^{g_2}$,
yielding that $e_2=5$ or $(3^{e_2}-1)/2=q$.

If $x=2^{e_1}-1$ is not a prime power, then $e_1\leq 359$ and therefore
$e_1$ must belong to the set
\begin{equation*}
\begin{split}
\{& 4, 6, 9, 11, 23, 37, 41, 49, 59, 67, 83, 97, 101, 103, 109, 131, 137, 139, 149, \\
& 167, 197, 199, 227, 241, 269, 271, 281, 293, 347\}.
\end{split}
\end{equation*}
Hence we can write $x=2^{e_1}-1=pq$ for distinct primes $p<q$.
By Lemma \ref{lm24}, $(3^{e_2}-1)/2=p^{g_2}$ and $(5^{e_3}-1)/4=p^{g_3}$ cannot simultaneously
hold.  In other words, at least one of these two integers must be divisible by $q$.
But, for no $e_1$ in the above set, $o_q(5)$ is $1$, a prime or prime-square,
as can be seen from Table \ref{tbl2}.
Hence $(3^{e_2}-1)/2$ must be divisible by $q$.  The only $e_1$ for which
$o_q(3)$ is $1$, a prime or prime-square is $e_1=4$.
Then we must have $x=2^4-1=3\times 5$ and $(p, q)=(3, 5)$.
But this implies that $e_2$ is divisible by $4$ and $(3^{e_2}-1)/2$ must
be divisible by $2$.  Hence $(3^{e_2}-1)/2$ cannot be of the form $p^{f_2}q^{g_2}$.
Hence it cannot occur that $x=2^{e_1}-1$ is not a prime power.

If $x=(3^{e_2}-1)/2=p^{f_2}$ is prime or prime power, then
\begin{equation*}
e_2\in\{2, 3, 5, 7, 13, 71, 103\}.
\end{equation*}
For none of them, $o_p(2)=1, 6$ or a prime power.
Hence, as above cases, $(5^{e_3}-1)/4$ must be divisible by $p$.
Since $o_p(5)$ must be $1$ or a prime power, we must have $e_2\in \{2, 3, 5\}$.
If $e_2=2$, then $p=2$ and $e_3=2$, which yields that $q=3$ and $e_1=2$.
If $e_2=3$, then $p=13$ and $e_3=4$, which is impossible since $(5^{e_3}-1)/4=156=2^2\times 3\times 13$
has three distinct prime factors.
If $e_2=5$, then $p=11$ and $e_3=5$.  Hence $(5^{e_3}-1)/4=781=11\times 71$.
This implies that $2^{e_1}-1=11^{f_1}71^{g_1}$, which is impossible
since $2^{10}-1=3\time 11\times 31$ and $2^{35}-1=31\times 71\times 127\times 122921$.

If $x=(3^{e_2}-1)/2$ is not a prime power, then
\begin{equation*}
e_2\in\{9, 11, 17, 19, 23, 37, 43, 59, 61, 223\}.
\end{equation*}
Hence we can write $x=(3^{e_1}-1)/2=pq$ for distinct primes $p<q$ with $p, q\neq 31$.
However, $o_q(2)$ or $o_q(5)$ can never be $1, 6$ or a prime power
among the above $e_2$'s.  Hence both $2^{e_1}-1$ and $(5^{e_3}-1)/4$
must be a power of $p$.  By Lemma \ref{lm24}, we must have $p=31$,
which is impossible as mentioned above.

If $x=(5^{e_3}-1)/4$ is a prime power, then
\begin{equation*}
e_3\in \{3, 7, 11, 13, 47, 127, 149, 181\}.
\end{equation*}
Among them, no $e_3$ gives a prime power (or one) residual order $3\pmod{x}$
and only $e_3=3$ makes the residual order $2\pmod{x}$ acceptable in view of Lemma \ref{lm22}.
Hence $p=31, e_3=3, e_1=5$ and $(3^{e_2}-1)/2=q^{f_2}$, which implies that
$e_2=5$ or $(3^{e_2}-1)/2=q$.

If $x=(5^{e_3}-1)/4$ is not a prime power, then
\begin{equation*}
e_3\in \{2, 5, 17, 23, 31, 41, 43, 59, 71\}.
\end{equation*}
Hence we can write $x=(5^{e_3}-1)/4=pq$ for distinct primes $p<q$.
None of such $e_3>2$ gives an acceptable residual order $2\pmod{q}$ or $3\pmod{q}$
in view of Lemma \ref{lm22}.
Hence we see that neither $2^{e_1}-1$ nor $(3^{e_2}-1)/2$ can be divisible by $q$
and both must be a power of $p$, contrary to Lemma \ref{lm24}.
Hence we must have $e_3=2, (p, q)=(2, 3)$.  This yields that $e_1=e_2=2$.

This completes the proof of Theorem \ref{th1}.

\begin{table}
\caption{The residual orders of $3, 5$ modulo $p$ for $p=2^{e_1}-1$}\label{tbl1}
\begin{center}
\begin{small}
\begin{tabular}{| c | c | c |}
 \hline
$e_1$ & $o_p(3)$ & $o_p(5)$ \\
 \hline
$2$ & N/A & $2$ \\
$3$ & $6$ & $6$ \\
$5$ & $30$ & $3$ \\
$7$ & $126$ & $42$ \\
$13$ & $910$ & $1365$ \\
$17$ & $131070$ & $65535$ \\
$19$ & $524286$ & $74898$ \\
$31$ & $715827882$ & $195225786$ \\
$61$ & $(10)$ & $(15)$ \\
$89$ & $(6)$ & $(84)$ \\
$107$ & $(6)$ & $(6)$ \\
$127$ & $(6)$ & $(6)$ \\
 \hline
\end{tabular}
\end{small}
\end{center}
\end{table}

\begin{table}
\caption{The residual orders of $3, 5$ modulo $p, q$ for $pq=2^{e_1}-1, p<q$}\label{tbl2}
\begin{center}
\begin{small}
\begin{tabular}{| c | c | c | c | c | c |}
 \hline
$e_1$ & $2^{e_1}-1=pq$ & $o_p(3)$ & $o_q(3)$ & $o_p(5)$ & $o_q(5)$ \\
 \hline
$9$ & $7\times 73$ & $6$ & $12$ & $6$ & $72$ \\
$11$ & $23\times 89$ & $11$ & $88$ & $22$ & $44$ \\
$23$ & $47\times 178481$ & $23$ & $178480$ & $46$ & $44620$ \\
$37$ & $223\times 616318177$ & $222$ & $308159088$ & $222$ & $616318176$ \\
$41$ & $13367\times 164511353$ & $6683$ & $164511352$ & $13366$ & $164511352$ \\
$49$ & $127\times \textrm{P13}$ & $126$ & $(8)$ & $42$ & $(8)$ \\
$59$ & $179951\times \textrm{P13}$ & $89975$ & $(8)$ & $89975$ & $(8)$ \\
$67$ & $193707721\times \textrm{P12}$ & $96853860$ & $(6)$ & $8071155$ & $(6)$ \\
$83$ & $167\times \textrm{P23}$ & $83$ & $(10)$ & $166$ & $(166)$ \\
$97$ & $11447\times \textrm{P26}$ & $5723$ & $(194)$ & $11446$ & $(194)$ \\
$101$ & $\textrm{P13}\times \textrm{P14}$ & $(303)$ & $(303)$ & $(303)$ & $(303)$ \\
$103$ & $2550183799\times \textrm{P22}$ & $(166)$ & $(206)$ & $(249)$ & $(309)$ \\
$109$ & $745988807\times \textrm{P24}$ & $(11663)$ & $(118)$ & $(214)$ & $(118)$ \\
$131$ & $263\times \textrm{P38}$ & $131$ & $(74)$ & $262$ & $(74)$ \\
$137$ & $\textrm{P20}\times \textrm{P22}$ & $(274)$ & $(66290053)$ & $(1202723)$ & $(66290053)$ \\
$139$ & $\textrm{P13}\times \textrm{P30}$ & $(6)$ & $(6)$ & $(6)$ & $(15)$ \\
$149$ & $\textrm{P20}\times \textrm{P25}$ & $(745)$ & $(16)$ & $(745)$ & $(8)$ \\
$167$ & $2349023\times \textrm{P44}$ & $(26)$ & $(22)$ & $(26)$ & $(22)$ \\
$197$ & $7487\times \textrm{P56}$ & $(3743)$ & $(394)$ & $(38)$ & $(394)$ \\
$199$ & $\textrm{P12}\times \textrm{P49}$ & $(14)$ & $(1393)$ & $(8)$ & $(1393)$ \\
$227$ & $\textrm{P18}\times \textrm{P52}$ & $(8)$ & $(35)$ & $(8)$ & $(497)$ \\
$241$ & $22000409\times \textrm{P66}$ & $(8)$ & $(5114261)$ & $(482)$ & $(5114261)$ \\
$269$ & $13822297\times \textrm{P74}$ & $(6)$ & $(6)$ & $(6)$ & $(22)$ \\
$271$ & $15242475217\times \textrm{P72}$ & $(8)$ & $(542)$ & $(8)$ & $(15)$ \\
$281$ & $80929\times \textrm{P80}$ & $(8)$ & $(278)$ & $(6)$ & $(417)$ \\
$293$ & $\textrm{P26}\times \textrm{P63}$ & $(6)$ & $(6)$ & $(8)$ & $(6)$ \\
$347$ & $\textrm{P23}\times \textrm{P82}$ & $(6)$ & $(6)$ & $(21)$ & $(8)$ \\
 \hline
\end{tabular}
\end{small}
\end{center}
\end{table}

\begin{table}
\caption{The residual orders of $2, 5$ modulo $p$ for $p^{f_2}=(3^{e_2}-1)/2$}\label{tbl3}
\begin{center}
\begin{small}
\begin{tabular}{| c | c | c |}
 \hline
$e_2$ & $o_p(2)$ & $o_p(5)$ \\
 \hline
$2$ & N/A & $1$ \\
$3$ & $12$ & $4$ \\
$5$ & $10$ & $5$ \\
$7$ & $1092$ & $364$ \\
$13$ & $398580$ & $30660$ \\
$71$ & $(8)$ & $(8)$ \\
$103$ & $(12)$ & $(14)$ \\
 \hline
\end{tabular}
\end{small}
\end{center}
\end{table}

\begin{table}
\caption{The residual orders of $2, 5$ modulo $p, q$ for $pq=(3^{e_2}-1)/2, p<q$}\label{tbl4}
\begin{center}
\begin{small}
\begin{tabular}{| c | c | c | c | c | c |}
 \hline
$e_2$ & $(3^{e_2}-1)/2=pq$ & $o_p(2)$ & $o_q(2)$ & $o_p(5)$ & $o_q(5)$ \\
 \hline
$9$ & $13\times 757$ & $12$ & $756$ & $4$ & $756$ \\
$11$ & $23\times 3851$ & $11$ & $3850$ & $22$ & $1925$ \\
$17$ & $1871\times 34511$ & $935$ & $595$ & $935$ & $3451$ \\
$19$ & $1597\times 363889$ & $532$ & $181944$ & $532$ & $22743$ \\
$23$ & $47\times 1001523179$ & $23$ & $(46)$ & $46$ & $(1073)$ \\
$37$ & $13097927\times \textrm{P12}$ & $(9731)$ & $8594564351$ & $(74)$ & $(74)$ \\
$43$ & $431\times \textrm{P18}$ & $43$ & $215$ & $(22)$ & $(22)$ \\
$59$ & $14425532687\times \textrm{P18}$ & $(3953)$ & $(118)$ & $(106)$ & $(10679)$ \\
$61$ & $603901\times \textrm{P24}$ & $201300$ & $(12)$ & $150975$ & $(145)$ \\
$223$ & $\textrm{P26}\times \textrm{P81}$ & $(446)$ & $(12)$ & $(6)$ & $(446)$ \\
 \hline
\end{tabular}
\end{small}
\end{center}
\end{table}

\begin{table}
\caption{The residual orders of $2, 3$ modulo $p$ for $p=(5^{e_3}-1)/4$}\label{tbl5}
\begin{center}
\begin{small}
\begin{tabular}{| c | c | c |}
 \hline
$e_3$ & $o_p(2)$ & $o_p(3)$ \\
 \hline
$3$ & $5$ & $30$ \\
$7$ & $6510$ & $6510$ \\
$11$ & $1220703$ & $369910$ \\
$13$ & $61035156$ & $1211015$ \\
$47$ & $(94)$ & $(6)$ \\
$127$ & $(18)$ & $(18)$ \\
$149$ & $(10)$ & $(6)$ \\
$181$ & $(12)$ & $(15)$ \\
 \hline
\end{tabular}
\end{small}
\end{center}
\end{table}

\begin{table}
\caption{The residual orders of $2, 3$ modulo $p, q$ for $pq=(5^{e_3}-1)/4, p<q$}\label{tbl6}
\begin{center}
\begin{small}
\begin{tabular}{| c | c | c | c | c | c |}
 \hline
$e_3$ & $(5^{e_3}-1)/4=pq$ & $o_p(2)$ & $o_q(2)$ & $o_p(3)$ & $o_q(3)$ \\
 \hline
$2$ & $2\times 3$ & N/A & $2$ & $1$ & N/A \\
$5$ & $11\times 71$ & $10$ & $35$ & $5$ & $35$ \\
$17$ & $409\times 466344409$ & $204$ & $3429003$ & $204$ & $116586102$ \\
$23$ & $8971\times \textrm{P12}$ & $8970$ & $(8)$ & $8970$ & $2306995565$ \\
$31$ & $1861\times \textrm{P18}$ & $1860$ & $(15)$ & $310$ & $(6)$ \\
$41$ & $2238236249\times \textrm{P19}$ & $279779531$ & $(8)$ & $(8)$ & $(8)$ \\
$43$ & $1644512641\times \textrm{P21}$ & $(8)$ & $(15)$ & $(8)$ & $(10)$ \\
$59$ & $\textrm{P17}\times \textrm{P25}$ & $(12)$ & $(9)$ & $(6)$ & $(118)$ \\
$71$ & $569\times \textrm{P47}$ & $284$ & $(142)$ & $568$ & $(452610863706241)$ \\
 \hline
\end{tabular}
\end{small}
\end{center}
\end{table}

{}

{\small Center for Japanese language and culture, Osaka University,\\ 562-8558, 8-1-1, Aomatanihigashi, Minoo, Osaka, Japan}\\
{\small e-mail: \protect\normalfont\ttfamily{tyamada1093@gmail.com} \url{http://tyamada1093.web.fc2.com/math/}

\begin{thebibliography}{}
\bibitem{Ban}
A. S. Bang, {\it Taltheoretiske Unders{\o}gelser}, Tidsskrift Math. \textbf{5 {IV}} (1886), 70--80 and 130--137.
\bibitem{BHM}
Yann Bugeaud, Guillaume Hanrot and Maurice Mignotte, {\it Sur l'\'{e}quation diophantienne $\frac{x^n-1}{x-1}=y^q$, III}, Proc. London Math. Soc. \textbf{84} (2002), 59--78
\bibitem{BM}
Y. Bugeaud and M. Mignotte, {\it On integers with identical digits}, Mathematika \textbf{46} (1999), 411--417.
\bibitem{BMh}
Yann Bugeaud and Preda Mih\u{a}ilescu, {it On the Nagell-Ljunggren equation $\frac{x^n-1}{x-1}=y^q$}, Math. Scand. \textbf{101} (2007), 177--183.
\bibitem{Cro}
Jonathan Crombie, My Factor Collection, \url{http://myfactors.mooo.com/}.
\bibitem{Dic}
L. E. Dickson, {\it On the cyclotomic function}, Amer. Math. Monthly \textbf{12} (1905), 86--89.
\bibitem{Kan}
H.-J. Kanold, {\it S{\"a}tze {\"u}ber Kreisteilungspolynome und ihre Anwendungen auf einige zahlentheoretische Probleme, {I}}, J. Reine Angew. Math. \textbf{187} (1950), 169--182.
\bibitem{LLL}
A. K. Lenstra, H. W. Lenstra Jr. and L. Lov\'{a}sz, {\it Factoring polynomials with rational coefficients}, Math. Ann. \textbf{261} (1982), 515--534.
\bibitem{MS}
A. Makowski and A. Schinzel, Sur l'\'{e}quation ind\'{e}termin\'{e}e de R. Goormaghtigh, Mathesis \textbf{68} (1959), 128--142.
\bibitem{Mat}
E. M. Matveev, {\it An explicit lower bound for a homogeneous rational linear form in the logarithms of algebraic numbers. II}, Izv. Ross. Akad. Nauk Ser. Mat. \textbf{64} (2000), 125--180, Eng. trans., Izv. Math. \textbf{64} (2000), 127--169.
\bibitem{Mih1}
Preda Mih\u{a}ilescu, {\it New bounds and conditions for the equation of Nagell-Ljunggren}, J. Number Theory \textbf{124} (2007), 380--395.
\bibitem{Mih2}
Preda Mih\u{a}ilescu, {\it Class number conditions for the diagonal case of the equation of Nagell-Ljunggren}, preprint,
available from \url{http://www.uni-math.gwdg.de/preda/mihailescu-papers/diag.pdf}.
\bibitem{Mch}
Shinichi Mochizuki, Inter-universal Teichm\"{u}ller Theory IV: Log-volume Computations and Set-theoretic Foundations,
available from \url{http://www.kurims.kyoto-u.ac.jp/~motizuki/papers-english.html}.
\bibitem{Sha}
Harold N. Shapiro, {\it Introduction to the Theory of Numbers}, John Wiley and Sons, New York, 1983.
\bibitem{Wag}
Sam Wagstaff, The Cunningham Project, \url{https://homes.cerias.purdue.edu/~ssw/cun/}.
\bibitem{Wak}
A. Wakulicz, Sur la question 3569, Mathesis \textbf{67} (1958), 133.
\bibitem{Weg}
B. M. M. de Weger, {\it Algorithms for diophantine equations}, CWI Tract 65, Stichting Mathematisch Centrum, Amsterdam, 1989, now available at \url{http://www.win.tue.nl/~bdeweger/getaltheorie.html}.
\bibitem{Ymd}
T. Yamada, {\it On finiteness of odd superperfect numbers}, \url{https://arxiv.org/abs/0803.0437}.
\bibitem{Zsi}
K. Zsigmondy, {\it Zur Theorie der Potenzreste}, Monatsh. f{\"u}r Math. \textbf{3} (1882), 265--284.
\end{thebibliography}
\end{document}